\documentclass[12pt,reqno]{amsart}
\usepackage{amsthm}
\usepackage{amssymb}
\usepackage{graphics}
\usepackage{tikz}
\usetikzlibrary{shapes,backgrounds,calc}
\usepackage{latexsym}
\usepackage{multicol}
\usepackage{verbatim,enumerate}
\usepackage{accents}
\usepackage{cite}
\usepackage{multirow}
\usepackage{bigstrut}
\usepackage{amsthm}
\usepackage{amssymb}
\usepackage{graphics}
\usepackage{tikz}
\usetikzlibrary{shapes,backgrounds,calc}
\usepackage{latexsym}
\usepackage{multicol}
\usepackage{verbatim,enumerate}
\usepackage{accents}
\usepackage{cite}
\usepackage{array}
\usepackage[colorlinks=true, linkcolor=blue, citecolor=blue, urlcolor=black]{hyperref}
\usepackage{hyperref}
\usepackage{amsmath, amscd,url}
\usepackage{multirow}
\usepackage{bigstrut}
\usepackage{longtable}

\usepackage{stackengine}

%\usepackage{showkeys}
%\parindent0pt 

%\newcommand{\Z}{\mbox{$\mathbb Z$}}	% For Integers
%\newcommand{\Q}{\mathbb Q}	% For Rational numbers
%\newcommand{\N}{\mbox{$\mathbb N$}}     % For Norm
     % For Real numbers
     % For Complex numbers

	% For Finite fields

%\usepackage[colorlinks=true, linkcolor=blue, citecolor=blue, urlcolor=black]{hyperref}
\usepackage{hyperref}
\usepackage{amsmath, amscd,url}
\usepackage{longtable}

\advance\textwidth by 1.3in \advance\oddsidemargin by -.6in \advance\evensidemargin by -.6in
\parskip=2mm
%\usepackage{eucal}

%%%%%%%%%%%%%%%%%%%%%%%%%%%%%%%%%%%%%%%%%%%%

 % local Weyl module
 % affine Lie algebra
 % other affine Lie algebra
 % finite Lie algebra
 % other finite Lie algebra
 % finite Cartan
 % affine Cartan
 % (hyper)special maximal parabolic
 % affine positive nilpotent
 % affine Borel
 % current algebra
 % other current algebra
 % 1/2 Heisenberg
 % Heisenberg center

 % finite root system

%%%%%%%%%%%%%%%%%%%%%%%%%%%%%%%%%%%%%%%%%%%%

%\newcommand{\mathscr}{\EuScript}
\theoremstyle{definition}

\newtheorem{theorem}{Theorem}[section]
\newtheorem{lemma}[theorem]{Lemma}

\theoremstyle{definition}
\newtheorem{definition}[theorem]{Definition}
\newtheorem{example}[theorem]{Example}

\theoremstyle{remark}
\newtheorem{remark}[theorem]{Remark}

\theoremstyle{definition}

\newcounter{cnt}
 \makeatletter
\def\mydggeometry{\makeatletter\dg@YGRID=1\dg@XGRID=20\unitlength=0.003pt\makeatother}
\makeatother \theoremstyle{remark}

% to make the notation environment unnumbered

\numberwithin{equation}{section}
\let\bwdg\bigwedge
\def\bigwedge{{\textstyle\bwdg}}

\newcommand{\nc}{\newcommand}
\newcommand{\rnc}{\renewcommand}

\nc{\cal}{\mathcal} \nc{\goth}{\mathfrak} \rnc{\bold}{\mathbf}

\nc\bomega{{\mbox{\boldmath $\omega$}}} \nc\bpsi{{\mbox{\boldmath $\Psi$}}}
 \nc\balpha{{\mbox{\boldmath $\alpha$}}}
 \nc\bpi{{\mbox{\boldmath $\pi$}}}
 \nc\bvpi{{\mbox{\boldmath $\varpi$}}}
\nc\chara{\operatorname{ch}}

  \nc\bxi{{\mbox{\boldmath $\xi$}}}
\nc\bmu{{\mbox{\boldmath $\mu$}}} \nc\bcN{{\mbox{\boldmath $\cal{N}$}}} \nc\bcm{{\mbox{\boldmath $\cal{M}$}}} \nc\blambda{{\mbox{\boldmath
$\lambda$}}}\nc\bnu{{\mbox{\boldmath $\nu$}}}

\makeatletter
\def\section{\def\@secnumfont{\mdseries}\@startsection{section}{1}%
  \z@{.7\linespacing\@plus\linespacing}{.5\linespacing}%
  {\normalfont\scshape\centering}}
\def\subsection{\def\@secnumfont{\bfseries}\@startsection{subsection}{2}%
  {\parindent}{.5\linespacing\@plus.7\linespacing}{-.5em}%
  {\normalfont\bfseries}}
\makeatother

 \nc{\Hom}{\operatorname{Hom}}
  \nc{\mode}{\operatorname{mod}}
\nc{\End}{\operatorname{End}} \nc{\wh}[1]{\widehat{#1}} \nc{\Ext}{\operatorname{Ext}} \nc{\ch}{\text{ch}} \nc{\ev}{\operatorname{ev}}
\nc{\Ob}{\operatorname{Ob}} \nc{\soc}{\operatorname{soc}} \nc{\rad}{\operatorname{rad}} \nc{\head}{\operatorname{head}}

 \nc{\Cal}{\cal} \nc{\Xp}[1]{X^+(#1)} \nc{\Xm}[1]{X^-(#1)}
\nc{\on}{\operatorname} \nc{\Z}{{\bold Z}} \nc{\J}{{\cal J}}  \nc{\Q}{{\bold Q}}

\nc{\N}{{\bold N}}  \nc\boa{\bold a} \nc\bob{\bold b} \nc\boc{\bold c} \nc\bod{\bold d} \nc\boe{\bold e} \nc\bof{\bold f} \nc\bog{\bold g}
\nc\boh{\bold h} \nc\boi{\bold i} \nc\boj{\bold j} \nc\bok{\bold k} \nc\bol{\bold l} \nc\bom{\bold m} \nc\bon{\mathbb n} \nc\boo{\bold o}
\nc\bop{\bold p} \nc\boq{\bold q} \nc\bor{\bold r} \nc\bos{\bold s} \nc\boT{\bold t} \nc\boF{\bold F} \nc\bou{\bold u} \nc\bov{\bold v}
\nc\bow{\bold w} \nc\boz{\bold z}\nc\ba{\bold A} \nc\bb{\bold B} \nc\bc{\mathbb C} \nc\bd{\bold D} \nc\be{\bold E} \nc\bg{\bold
G} \nc\bh{\bold H} \nc\bi{\bold I} \nc\bj{\bold J} \nc\bk{\bold K} \nc\bl{\bold L} \nc\bm{\bold M} \nc\bn{\mathbb N} \nc\bo{\bold O} \nc\bp{\bold
P} \nc\bq{\bold Q} \nc\br{\bold R} \nc\bs{\bold S} \nc\bt{\bold T} \nc\bu{\bold U} \nc\bv{\bold V} \nc\bw{\bold W} \nc\bz{\mathbb Z} \nc\bx{\bold
x} \nc\KR{\bold{KR}} \nc\rk{\bold{rk}} \nc\het{\text{ht }}

\nc\toa{\tilde a} \nc\tob{\tilde b} \nc\toc{\tilde c} \nc\tod{\tilde d} \nc\toe{\tilde e} \nc\tof{\tilde f} \nc\tog{\tilde g} \nc\toh{\tilde h}
\nc\toi{\tilde i} \nc\toj{\tilde j} \nc\tok{\tilde k} \nc\tol{\tilde l} \nc\tom{\tilde m} \nc\ton{\tilde n} \nc\too{\tilde o} \nc\toq{\tilde q}
\nc\tor{\tilde r} \nc\tos{\tilde s} \nc\toT{\tilde t} \nc\tou{\tilde u} \nc\tov{\tilde v} \nc\tow{\tilde w} \nc\toz{\tilde z} \nc\woi{w_{\omega_i}}

\begin{document}
%\setcounter{section}{0}
%\setcounter{tocdepth}{1}

%%%%%%%%%%%%%%%%%%%%%%%%%%%%%%%%%%%%%%%%%%%%

%\title{On Generalized $\phi$-Laguerre Polynomials and their irreducibility}
\title{On the irreducibility of extended Laguerre Polynomials}

\author[Anuj Jakhar]{Anuj Jakhar}
\author[Srinivas Kotyada]{Srinivas Kotyada}
\author[Arunabha Mukhopadhyay]{Arunabha Mukhopadhyay}
\address[Anuj Jakhar]{Department of Mathematics, Indian Institute of Technology (IIT) Madras}
\address[Srinivas Kotyada]{Department of Mathematics, The Institute of Mathematical Sciences (IMSc) Chennai}
\address[Arunabha Mukhopadhyay]{Department of Mathematics, The Institute of Mathematical Sciences (IMSc) Chennai}
%\address[Surender Kumar]{Department of Mathematics, Indian Institute of Technology (IIT) Bhilai}

\email[Anuj Jakhar]{anujjakhar@iitm.ac.in \\ anujiisermohali@gmail.com}
\email[Srinivas Kotyada]{srini@imsc.res.in}
\email[Arunabha Mukhopadhyay]{arunabham@imsc.res.in}

%\email[Surender Kumar]{surenderk@iitbhilai.ac.in}

%\thanks{The first author is employed at IIT Madras and thankful to SERB grant SRG/2021/000393.}

\subjclass [2010]{11C08, 11R04}
\keywords{Irreducibility, Polynomials, Newton Polygons}

\maketitle

\vspace{-0.2in}
%\begin{flushright}
	\hspace{2.4in}{\it{-- -- Dedicated to Professor Michael Filaseta}}
%\end{flushright}

\begin{abstract}
\noindent 
Let $m\geq 1$ and $a_m$ be integers. Let $\alpha$ be a rational number which is not a negative integer such that $\alpha = \frac{u}{v}$ with $\gcd(u,v) = 1, v>0$. Let $\phi(x)$ belonging to $\Z[x]$ be a monic polynomial which is irreducible modulo all the primes less than or equal to $vm+u$. Let $a_i(x)$ with $0\leq i\leq m-1$ belonging to $\Z[x]$  be polynomials having degree less than $\deg\phi(x)$. Assume that the content of $(a_ma_0(x))$ is not divisible by any prime less than or equal to $vm+u$. In this paper, we prove that the polynomials $L_{m,\alpha}^{\phi}(x) = \frac{1}{m!}(a_m\phi(x)^m+\sum\limits_{j=0}^{m-1}b_ja_j(x)\phi(x)^j)$ are irreducible over the rationals for all but finitely many $m$, where $b_j = \binom{m}{j}(m+\alpha)(m-1+\alpha)\cdots (j+1+\alpha)~~~\mbox{ for }0\leq j\leq m-1$. Further, we show that $L_{m,\alpha}^{\phi}(x)$ is irreducible over rationals for each $\alpha \in \{0, 1, 2, 3, 4\}$ unless $(m, \alpha) \in \{ (1,0), (2,2), (4,4),(6,4)\}.$ For proving our results, we use the notion of $\phi$-Newton polygon and some results from analytic number theory. We illustrate our results through examples. 

\end{abstract}
\maketitle

\section{Introduction and statements of results}\label{intro}

Let $m$ be a positive integer and $\alpha$ be an arbitrary real number.
  The generalized Laguerre polynomials are defined by 
 $$L_{m,\alpha}(x) = \sum\limits_{j=0}^{m}\frac{(m+\alpha)(m-1+\alpha)\cdots (j+1+\alpha)}{(m-j)!j!}(-x)^j.$$
 The classical Laguerre polynomials correspond to $\alpha = 0.$ They appear in various branches of Mathematics and Mathematical Physics. In a series of papers, I. Schur (cf. \cite{Sch1} -\cite{Sch3}) obtained irreducibility results for polynomials associated with $L_{m,\alpha}(x)$ by considering prime ideal factorization of the principal ideals generated by the coefficients of the polynomials. Later, Coleman \cite{Col} and Filaseta \cite{Fil} established Schur's results by invoking the main Newton polygon result of Dumas. It is interesting to note that Filaseta and Trifonov \cite{Fil2} showed the irreducibility of Laguerre polynomials corresponding to $\alpha = -2m-1$ which in turn confirm a conjecture of Grosswald on the irreducibility of Bessel polynomials.  The irreducibility of generalized Laguerre polynomials and their Galois groups were widely studied by many authors for several  values of $\alpha$ (cf.  \cite{Fila},\cite{Fila1},\cite{Haj},\cite{Haj1},\cite{Sho} -\cite{Sho5}).

 In this paper, let $m\geq 1$ and $a_m$ be integers. Let $\alpha$ be a rational number which is not a negative integer such that $\alpha = \frac{u}{v}$ with $\gcd(u,v) = 1, v>0$. Let $\phi(x)$ belonging to $\Z[x]$ be a monic polynomial which is irreducible modulo all the primes less than or equal to $vm+u$. Let $a_i(x)$ with $0\leq i\leq m-1$ belonging to $\Z[x]$  be polynomials having degree less than $\deg\phi(x)$. Assume that the content of $(a_ma_0(x))$ is not divisible by any prime less than or equal to $vm+u$.
Then, we define the generalized $\phi$-Laguerre polynomials by 
$$L_{m,\alpha}^{\phi}(x) = \frac{1}{m!}(a_m\phi(x)^m+\sum\limits_{j=0}^{m-1}b_ja_j(x)\phi(x)^j),$$ where $b_j = \binom{m}{j}(m+\alpha)(m-1+\alpha)\cdots (j+1+\alpha)~~~\mbox{ for }0\leq j\leq m-1.$

With this definition, the aim of this paper is to prove the following results.
\begin{theorem}\label{thm1.1}
	%There exists  a sufficiently large integer $m_0$ such that if $m > m_0$, then 
	The polynomials $L_{m,\alpha}^{\phi}(x)$ are irreducible over the rationals for all but finitely many positive integers $m$.  
\end{theorem}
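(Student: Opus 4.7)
The plan is to adapt the classical Schur--Coleman--Filaseta Newton polygon approach to the $\phi$-adic setting. By Gauss's lemma it suffices to consider the integer polynomial
\[
P(x) := m!\, L_{m,\alpha}^{\phi}(x) = a_m \phi(x)^m + \sum_{j=0}^{m-1} b_j\, a_j(x)\, \phi(x)^j,
\]
and show that for all sufficiently large $m$ it admits no factorization $P = FG$ in $\Z[x]$ with both factors of positive degree. Assuming such a factorization for contradiction, the irreducibility of $\phi$ over $\Q$ (inherited from its irreducibility modulo any prime $\leq vm + u$) implies that $F$ and $G$ have unique $\phi$-adic expansions with coefficient polynomials of degree less than $\deg \phi$. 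Their $\phi$-degrees $j_F, j_G$ then satisfy $j_F + j_G = m$ with $0 < j_F, j_G < m$.

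For each prime $p \leq vm + u$, form the $\phi$-Newton polygon $N_p(P)$, defined as the lower convex hull of the points $(j, v_p(c_j))$, where $c_j$ is the $j$-th $\phi$-adic coefficient of $P$ and $v_p(c_j)$ is the minimum $p$-adic valuation of the coefficients of $c_j$. The hypothesis that the content of $a_m a_0(x)$ is coprime to $p$ pins the endpoints of $N_p(P)$ to $(0, v_p(b_0))$ and $(m, 0)$. Ore's $\phi$-adic Newton polygon theorem then asserts that the edges of $N_p(F)$ and $N_p(G)$ reassemble into those of $N_p(P)$, so the horizontal lengths of the edges of $N_p(P)$ together with the denominators of their slopes in lowest terms strongly constrain the admissible values of $j_F$.

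The arithmetic heart of the proof is to select primes $p$ producing decisive Newton polygons. For large $j_F$, a Sylvester--Schur type result applied to the arithmetic progression $v + u,\, 2v + u,\, \ldots,\, mv + u$ yields, for $m$ large, a prime $p > m$ dividing a single term $k_0 v + u$; since also $p \nmid \binom{m}{j}$ by Kummer's theorem, one obtains $v_p(b_j) = v_p(k_0 v + u)$ for $j < k_0$ and zero otherwise, producing in $N_p(P)$ a sloped edge from $(0, v_p(k_0 v + u))$ to $(k_0, 0)$ whose slope denominator (in lowest terms) rules out certain factor degrees. For small $j_F$, Kummer-type control on $v_p\binom{m}{j_F}$ furnishes primes producing sharp interior vertices at $(j_F, *)$, excluding $j_F$ as a possible $\phi$-degree.

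The main obstacle, I expect, is the combinatorial bookkeeping: verifying that by varying the prime appropriately one rules out \emph{every} candidate $j_F \in \{1, \ldots, m - 1\}$ simultaneously for all but finitely many $m$. This will require quantitative refinements of Bertrand's postulate (for primes in arithmetic progressions modulo $v$), of the Sylvester--Schur theorem, and of the $p$-adic analysis of $\binom{m}{j}$; these analytic number theory inputs will occupy the bulk of the technical argument.
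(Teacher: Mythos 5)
There is a genuine gap: your proposal correctly identifies the general strategy (exclude each possible factor degree by exhibiting, for each range of degrees, a prime whose $\phi$-Newton polygon for $m!\,L^{\phi}_{m,\alpha}$ is decisive), but the entire arithmetic heart --- actually producing such a prime for \emph{every} candidate degree and all large $m$ --- is exactly the part you defer as ``combinatorial bookkeeping,'' and it is where all the work lies. The paper needs three distinct analytic inputs for three regimes of the putative factor's $\phi$-degree $k$: for $m^{11/20+\varepsilon}<k\le m/2$ it invokes the Baker--Harman--Pintz theorem on primes in short intervals in arithmetic progressions to find a prime equal to some $v(m-j)+u$ with $j<k$ (a quantitative refinement of Bertrand/Dirichlet down to windows of length about $x^{11/20+\varepsilon}$, which no ``Bertrand for progressions'' will give you); for $k_0\le k\le m^{11/20+\varepsilon}$ it runs an Erd\H{o}s-style counting argument to show the product $\prod_{j=0}^{k-1}(v(m-j)+u)$ has a prime factor exceeding $k\sqrt{\log k}$; and for bounded $k\ge 2$ and for $k=1$ it uses the Shorey--Tijdeman theorem that the greatest prime factor of $(am+b)(cm+d)$ tends to infinity. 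Your single appeal to ``a Sylvester--Schur type result'' cannot supply a prime $p>m$ dividing one of only $k$ terms each of size about $vm$ in the relevant short window; Sylvester--Schur-type bounds give primes of size comparable to the number of terms, not to the terms themselves, and in the medium range the paper only obtains (and only needs) $p>k\sqrt{\log k}$ together with the slope estimate $v_p(b_0)-v_p(b_j)<j/k$.

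Two further points. First, your claim that a factorization $P=FG$ forces $\phi$-degrees with $j_F+j_G=m$ is false when $\deg\phi>1$: a factor may have degree $k\deg\phi+r$ with $0<r<\deg\phi$. The paper avoids this by (a) separately ruling out factors of degree less than $\deg\phi$ via reduction modulo a prime dividing $vm+u$ (using that $\phi$ is irreducible modulo that prime), and (b) using a Filaseta-style Newton polygon lemma that excludes factors with degree in the whole interval $[k\deg\phi,(k+1)\deg\phi)$ once the right-most edge has slope less than $1/k$ and the lower coefficients have positive valuation --- rather than the Dumas-style ``slope denominator'' argument you describe. Second, that lemma is also what lets the coefficients $a_j(x)$ (which are arbitrary subject to the content condition) be absorbed harmlessly; your sketch does not address them at all.
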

%\begin{remark}
%	 We wish to point here that, it seems after going through the proof of Theorem \ref{thm1.1} that for $m > $ $L_{m,\alpha}^{\phi}(x)$ are irreducible over the rationals.
%\end{remark}

\begin{theorem}\label{thm1.2}
	The polynomials $L_{m,\alpha}^{\phi}(x)$ are irreducible over the rationals for $\alpha \in \{0, 1, 2, 3, 4\}$ unless $(m, \alpha) \in \{ (1,0), (2,2) , (4,4) ,(6,4)\} .$
\end{theorem}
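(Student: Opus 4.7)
The plan is to prove Theorem 1.2 by making Theorem 1.1 effective for each $\alpha \in \{0,1,2,3,4\}$ and then verifying the finitely many remaining small cases by direct $\phi$-Newton polygon computation. Assume for contradiction that $L_{m,\alpha}^{\phi}(x) = G(x)H(x)$ nontrivially over $\Q$. After clearing denominators and using Gauss's lemma, we may take $G, H \in \Z[x]$ primitive with $\deg G \leq \deg H$; expanding $G(x) = \sum_{i} G_i(x)\phi(x)^i$ with $\deg G_i < \deg \phi$ and letting $k$ be its $\phi$-degree, we have $1 \leq k \leq \lfloor m/2 \rfloor$.

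The central tool will be the $\phi$-Newton polygon of $L_{m,\alpha}^{\phi}(x)$ at a prime $p \leq vm+u = m+\alpha$, which is well-defined because $\phi$ is irreducible modulo such $p$. Its endpoints are
\begin{equation*}
\bigl(0,\; v_p(b_0/m!)\bigr) \qquad \text{and} \qquad \bigl(m,\; -v_p(m!)\bigr),
\end{equation*}
since the content hypothesis forces $v_p(a_m) = v_p(\mathrm{cont}(a_0(x))) = 0$, while the intermediate lattice points $(j,\, v_p(\mathrm{cont}(b_j a_j(x)/m!)))$ lie on or above the ``pure'' polygon obtained by taking all $a_j \equiv 1$. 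A factorization of $\phi$-degree $k$ forces a sub-polygon of horizontal length $k$ to appear as a union of edges of the full polygon, each edge of slope $-s/r$ in lowest terms contributing horizontal length divisible by $r$. Using $v_p(b_j/m!) = v_p\bigl((m+\alpha)!/((m-j)!\,j!\,(j+\alpha)!)\bigr)$ for the integer values of $\alpha$ in question and Legendre's theorem, these slopes can be computed explicitly.

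To convert this into an effective statement, for each candidate $k$ I would choose a prime $p$ in a short interval (say $m-k < p \leq m+\alpha$ with $p > \alpha$) and show that the $\phi$-Newton polygon at $p$ exhibits a dominant edge whose slope-denominator does not divide $k$, contradicting the factorization. Existence of such a prime for every $k \leq m/2$ whenever $m \geq M_\alpha$ follows from explicit prime-gap estimates of Nagura, Ramanujan, or Schoenfeld type (strengthenings of Bertrand's postulate), yielding a computable bound $M_\alpha$ expected to lie in the low tens. For each pair $(m, \alpha)$ with $\alpha \in \{0,1,2,3,4\}$ and $m < M_\alpha$ not in the exceptional list, the claim is then checked by tabulating the $\phi$-Newton polygon at every prime $p \leq m+\alpha$ and verifying that at least one has a single edge of slope $-s/r$ in lowest terms with $r = m$, forcing irreducibility over $\Q_p$ and hence over $\Q$.

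The main obstacle will lie in the four exceptional pairs $(1,0), (2,2), (4,4), (6,4)$. For these the Newton-polygon mechanism above must genuinely fail, and one needs to exhibit, for each exceptional pair, specific choices of $\phi$, $a_m$ and the $a_i(x)$ satisfying the hypotheses for which $L_{m,\alpha}^{\phi}(x)$ admits a nontrivial factorization over $\Q$ --- a coincidence of $p$-adic valuations at the few small primes $p \leq m+\alpha$ that collapses every candidate slope-denominator to a divisor of $k$. I expect the delicate analysis at $(4,4)$ and $(6,4)$, where $m+\alpha$ is just large enough to admit multiple relevant primes but just small enough for their valuations to align, to demand the most care, both in ruling out nearby pairs and in producing genuine factoring examples; a short hand or computer-assisted computation should settle it.
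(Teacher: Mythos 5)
Your overall framework---excluding a factor of $\phi$-degree $k$ for each $1 \le k \le m/2$ by exhibiting, for each $k$, a prime $p$ dividing $b_j$ for $0 \le j \le m-k$ whose $\phi$-Newton polygon has right-most slope less than $1/k$---is the same as the paper's, which channels everything through Lemma \ref{fila}. But the arithmetic input you propose cannot deliver the theorem. You want the prime $p$ to lie in the short interval $(m-k,\,m+\alpha]$ and to obtain it from explicit Bertrand-type estimates (Nagura, Schoenfeld). That works only when $k$ is a positive proportion of $m$; for the hardest cases $k\in\{1,2,3\}$ you would need a prime in an interval of bounded length near $m$, which no prime-gap theorem supplies and which fails for infinitely many $m$. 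What is actually needed---and what the paper uses---is not a prime \emph{in} the interval but a large prime \emph{factor} of the product $(m-k+u+1)\cdots(m+u)$ of $k$ consecutive terms: Lemma \ref{prime7} (via Lehmer and Ecklund--Eggleton--Erd\"{o}s--Selfridge) classifies exactly when this product has no prime factor $\ge k+u+1$, and for $k=1$ Lemma \ref{dio} (Nagell, Mihailescu/Catalan) classifies when $m(m+u)$ has no prime factor $\ge u+2$. The four exceptional pairs $(1,0),(2,2),(4,4),(6,4)$ are precisely the residue of these classifications; without such results your bound $M_\alpha$ ``in the low tens'' cannot be established and the exceptional set cannot be pinned down.

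Two further points. Your closing verification step---finding a single prime $p\le m+\alpha$ whose polygon is one edge of slope $-s/r$ with $r=m$, hence irreducibility over $\Q_p$---is too strong: such an Eisenstein-type prime usually does not exist (already for $\alpha=0$ the Schur--Filaseta argument must use different primes for different $k$), and in any case the theorem concerns a whole family of polynomials with varying $a_j(x)$, so one must argue through the polygon of the ``pure'' polynomial $g(x)=\sum_j b_j\phi(x)^j$ and Lemma \ref{fila}, not through $p$-adic irreducibility of one specific $L_{m,\alpha}^{\phi}$. Finally, to prove the theorem as stated you do not need to produce factorizations at the four exceptional pairs (the statement asserts nothing there), so the ``main obstacle'' you identify is not part of the proof; the genuine difficulty sits in the small-$k$ analysis that your plan leaves to prime gaps.
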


\begin{remark}
	The above theorem gives rise to a wide class of irreducible polynomials whose irreducibility  do not seem to follow from the known irreducibility criterion (cf. \cite{brown2008}, \cite{JakBLMS}, \cite{JakAMS}, \cite{JakJA}, \cite{JakAM}, \cite{Ja-Sa} and \cite{Jho-Kh}).
\end{remark}
\begin{remark}
	Theorem \ref{thm1.2} can be extended for other values of $\alpha$, even for rational values of $\alpha.$
\end{remark}

%We wish to point here that for $m\geq 7$, the polynomials $L_{m,\alpha}^{\phi}(x)$ are irreducible over the rationals for $\alpha \in \{0, 1, 2, 3, 4\}$. 
%It may be noted that, in the classical case when $\phi(x) = x$, Theorem \ref{thm1.1} generalizes the main result of \cite{Fila1}. Theorem \ref{thm1.2} extends the main result of \cite[Theorem 1]{Ji-Kh}. 

It may be pointed out that in Theorem \ref{thm1.2}, the assumption  that ``the content of $(a_ma_0(x))$ is not divisible by any prime less than or equal to $vm+u$" cannot be dispensed with. This can be seen, for example taking $\phi(x)=x^2-x+17$ and $m=2$, as displayed in the tables below.   
\begin{longtable}[h!]{|m{1.0cm}|m{1cm}|m{1cm}|m{1cm}|m{5cm}|} 
\hline 
 $ \alpha $ & $a_2$ & $a_1(x) $ &$a_0 (x)$ & $L_{m,\alpha}^{\phi}(x)$ \\
			\hline
			1 & 3 & 1& -4 & $\frac{3}{2}(\phi(x)+4)(\phi(x)-2)$\\  \hline
			2 & 1 & 1 & -4 &$\frac{1}{2}(\phi(x)-4)(\phi(x)+12) $ \\ \hline
			3 & 1& 1 & -10 & $\frac{1}{2}(\phi(x)-10)(\phi(x)+20) $\\ \hline
			4 & 3& 1 & -6 & $ \frac{3}{2}(\phi(x)+10)(\phi(x)-6) $\\
\hline
 
 \caption{\label{tab:2} The content of $a_0(x)$ is divisible by $2$.}
\end{longtable}
\newpage
 \begin{longtable}[h!]{|m{1.0cm}|m{1cm}|m{1cm}|m{1cm}|m{5cm}|} 
\hline 
 $ \alpha $ & $a_2$ & $a_1(x) $ &$a_0 (x)$ & $L_{m,\alpha}^{\phi}(x)$ \\
			\hline
			1 & 6 & 2& 1 & $3(\phi(x)+1)^2$\\ \hline
			2 & 4 & 1 & -1 &$2(\phi(x)+3)(\phi(x)-1) $ \\ \hline
			3 & 2& 1 & -5 & $(\phi(x)+10)(\phi(x)-5) $\\ \hline
			4 & 18& 1 & -1 & $(\phi(x)-1)(9\phi(x)+15) $\\ \hline
 
 \caption{\label{tab:2} $a_2$ is divisible by $2$.}
\end{longtable}

We can also notice that Theorem \ref{thm1.2} may not hold if $a_n$ is replaced by a (monic) polynomial $a_n(x)$ with integer coefficients having degree less than $\deg\phi(x)$. Consider $\phi(x) = x^2-x+5$ which is irreducible modulo $2$ and $3$. Take $a_2(x) = x-3 = a_1(x) = a_0(x)$. Then the polynomial $\frac{1}{2}({a_2(x)}\phi(x)^2 + 2(2+\alpha)a_1(x)\phi(x)+(2+\alpha)(1+\alpha)a_0(x))$ has $3$ as a root for each $\alpha \in \{0,1,2,3,4\}$.

As an application, we now provide an example to show that Theorem \ref{thm1.2} leads to classes of irreducible polynomials. 
 \begin{example}
	Consider $\phi(x) = x^2-x+17$. It can be easily checked that $\phi(x)$ is irreducible modulo $2,3,5,7,11$ and $13$. Let $m$ and $\alpha$ be fixed integers such that $m\in \{3,5,7,8,9,10,11,12\}$, $\alpha \in \{0,1,2,3,4\}$. Let $a_m$ be an integer and $a_{i}(x) \in \Z[x]$ be polynomials each having degree less than $2$ for $0\leq i\leq m-1$. Assume that the content of $(a_ma_0(x))$ is not divisible by any prime less than or equal to $13$. Then by Theorem \ref{thm1.2}, the polynomial	$ L_{m,\alpha}^{\phi}(x)$ 
   is irreducible over $\Q$.
\end{example}

We use the concept of $\phi$-Newton polygon as introduced by Ore \cite{Ore}, few results on primes from analytical number theory to prove our theorems.% The proof of Theorems \ref{thm1.1}, \ref{thm1.2} is based on the different ideas starting with the concept of $\phi$-Newton polygon with respect to a prime number (see Definition \ref{def1}) . 

%\begin{remark}
%	It would not be difficult to extend Theorem \ref{thm1.2} further to other values of $\alpha$ and even consider rational values of $\alpha$ with these same methods.
%\end{remark}

\section{$\phi$-Newton polygon.}
In this section, we shall recall the notion of $\phi$-Newton polygon and state a result which will be used in the proof of the theorems. 

For a prime $p$, $v_p$ will denote the $p$-adic valuation of $\Q$ defined for any non-zero integer $b$ to be the highest power of $p$ dividing $b$. 
We shall denote by $v_p^x$ the Gaussian valuation extending $v_p$ defined on the polynomial ring $\Z[x]$ by $$v_p^x(\sum\limits_{i}b_ix^i) = \min_{i}\{v_p(b_i)\}, b_i \in \Z.$$ The $\phi$-Newton polygon with respect to $p$ is defined as below.
\begin{definition}\label{def1}
	Let $p$ be a prime number and $\phi(x) \in \Z[x]$ be a monic polynomial which is irreducible modulo $p$. Let $f(x)$ belonging to $\Z[x]$ be a polynomial having $\phi$-expansion\footnote{If $\phi(x)$ is a fixed monic polynomial with coefficients in $\Z$, then any $f(x)\in \Z[x]$ can be uniquely written as a finite sum $\sum\limits_{i}b_i(x)\phi(x)^i$ with $\deg b_i(x)< \deg \phi(x)$ for each $i$; this expansion will be referred to as the $\phi$-expansion of $f(x)$.} $\sum\limits_{i=0}^{n}b_i(x) \phi(x)^i$ with $b_0(x)b_n(x) \neq 0$. Let $P_i$ stand for the point in the plane having coordinates $(i, v_p^x(b_{n-i}(x)))$ when $b_{n-i}(x)\neq 0$, $0\leq i \leq n$.
Let $\mu_{ij}$ denote the slope of the line joining the points $P_i$ and $P_j$ if $b_{n-i}(x)b_{n-j}(x)\ne 0$. Let $i_1$ be the largest index $0< i_1 \leq n$ such that 
\begin{align*}
	\mu_{0i_1}=\min \{\mu_{0j}\ |\ 0<j \leq n,\ b_{n-j}(x)\ne 0\}.
\end{align*}
If $i_1<n$, let $i_2$ be the largest index $i_1< i_2 \leq n$ such that 
\begin{align*}
	\mu_{i_1i_2}=\min \{\mu_{i_1j}\ |\ i_1<j \leq n,\ b_{n-j}(x)\ne 0\}
\end{align*}
and so on. The $\phi$-Newton polygon of $f(x)$ with respect to $p$ is the polygonal path having segments $P_0P_{i_1}, P_{i_1}P_{i_2}, \dots, P_{i_{k-1}}P_{i_k}$ with $i_k=n$. These segments are called the edges of the $\phi$-Newton polygon of $f(x)$ and their slopes from left to right form a strictly increasing sequence. %The $\phi$-Newton polygon minus the horizontal part (if any) is called its principal part.
\end{definition}

We shall use the following lemma\cite[Theorem 1.3]{Ji-Kh} in the sequel. We omit its proof.
%With the above notation, we prove the following theorem, which was proved by Filaseta \cite[Lemma 2]{filaseta1995} in 1995 in the particular case when $\phi(x)=x$; the proof given here is similar to the one given by him.

\begin{lemma} \label{fila}
	Let $m, k$ and $\ell$ be integers with $0\leq \ell<k\leq \frac m2$ and $p$ be a prime. Let $\phi(x)\in \Z[x]$ be a monic polynomial which is irreducible modulo $p$. Let $f(x)$ belonging to $\Z[x]$ be a monic polynomial not divisible by $\phi(x)$ having $\phi$-expansion $\sum\limits_{i=0}^{m}f_i(x) \phi(x)^i$ with $f_m(x)\ne 0$. Assume that $v_p^x(f_i(x))> 0$ for $0\leq i\leq m-\ell-1$ and the right-most edge of the $\phi$-Newton polygon of $f(x)$ with respect to $p$ has slope less than $\frac 1k$. Let $a_0(x), a_1(x), \dots,a_m(x)$ be polynomials over $\Z$ satisfying the following conditions. 
	\begin{itemize}
		\item[(i)] $\deg a_i(x)< \deg \phi(x)-\deg f_i(x)$ for $0\leq i \leq m$, 
		\item[(ii)] $v_p^x(a_0(x))=0$, i.e., the content of $a_0(x)$ is not divisible by $p$,
		\item[(iii)] the leading coefficient of $a_m(x)$ is not divisible by $p$.
	\end{itemize}
	Then the polynomial $\sum\limits_{i=0}^{m} a_i(x) f_i(x) \phi(x)^i$	does not have a factor in $\Z[x]$ with degree lying in the interval $[(\ell+1)\deg \phi, (k+1)\deg \phi).$
\end{lemma}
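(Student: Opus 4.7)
Plan: Set $d = \deg \phi$ and $F(x) = \sum_{i=0}^m a_i(x) f_i(x) \phi(x)^i$. Hypothesis (i) ensures each product $a_i(x) f_i(x)$ has degree less than $d$, so this really is the $\phi$-expansion of $F$. Using (ii), (iii), and the fact that $f$ is monic (so $v_p^x(f_m) = 0$), both endpoints of the $\phi$-Newton polygon of $F$ with respect to $p$ match those of $f$: they begin at $(0, 0)$ and end at $(m, v_p^x(f_0))$.

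Suppose for contradiction that $F = h q$ in $\Z[x]$ with $(\ell+1) d \le \deg h < (k+1) d$; then the $\phi$-degree of $h$ is some $m' \in \{\ell+1, \dots, k\}$, and $p$ divides neither $\operatorname{lc}(h)$ nor $\operatorname{lc}(q)$ (since $p \nmid \operatorname{lc}(F) = \operatorname{lc}(a_m)$). The first key step is to show that every slope of $F$'s polygon is strictly less than $1/k$. Because $v_p^x(a_j f_j) \ge v_p^x(f_j)$ at each index $j$, the data points for $F$ lie pointwise on or above those for $f$, so the lower convex hulls $Y^F, Y^f$ satisfy $Y^F(i) \ge Y^f(i)$ on $[0, m]$ with equality at both endpoints. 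Taking $i \to m^-$ in the inequality $(Y^F(m) - Y^F(i))/(m - i) \le (Y^f(m) - Y^f(i))/(m - i)$, the slope of the rightmost edge of $F$'s polygon is bounded above by that of $f$'s, hence is $< 1/k$ by hypothesis. Since slopes along a Newton polygon increase strictly from left to right, every slope of $F$'s polygon is $< 1/k$.

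Next I apply the product theorem for $\phi$-Newton polygons: since $\bar\phi$ is irreducible in $\F_p[x]$, the valuation $v_p^x$ extends uniquely to the unramified $p$-adic extension with residue field $\F_p[x]/(\bar\phi)$, and Ore's classical theorem yields that the slopes of $h$'s polygon form a submultiset of those of $F$. In particular, every slope of $h$'s polygon is $< 1/k$. The polygon of $h$ starts at $(0, 0)$ and ends at $(m', v_p^x(h_0))$, with $h_0 \ne 0$: indeed, $F \equiv a_0 f_0 \pmod \phi$, and since $\phi$ is irreducible of degree $d > \max(\deg a_0, \deg f_0)$ while $a_0, f_0$ are both nonzero, $\phi \nmid F$, so $\phi \nmid h$. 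The average slope $v_p^x(h_0)/m'$ of $h$'s polygon is therefore $< 1/k$, which forces $v_p^x(h_0) < m'/k \le 1$, i.e. $v_p^x(h_0) = 0$. Equivalently, $\bar\phi \nmid \bar h$ in $\F_p[x]$.

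Finally, I reduce $F$ modulo $p$. Since $\bar f_i = 0$ for $0 \le i \le m - \ell - 1$, I get $\bar F = \bar\phi^{m-\ell} \cdot S$ in $\F_p[x]$, where $S = \sum_{j=0}^{\ell} \bar a_{m-\ell+j}\, \bar f_{m-\ell+j}\, \bar\phi^j$ is nonzero (its top $\bar\phi$-coefficient is $\bar a_m \ne 0$ by (iii)). Thus $\bar\phi^{m-\ell} \mid \bar F = \bar h \bar q$; combining this with $\bar\phi \nmid \bar h$ and the irreducibility of $\bar\phi$ forces $\bar\phi^{m-\ell} \mid \bar q$. But $\deg q = \deg F - \deg h < (m+1)d - (\ell+1)d = (m-\ell)d = \deg \bar\phi^{m-\ell}$, while $\bar q \ne 0$ by Gauss's lemma (as $p \nmid \operatorname{content}(F)$ since $p \nmid \operatorname{lc}(F)$). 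This contradicts $\bar\phi^{m-\ell} \mid \bar q$, completing the proof. The most delicate step I anticipate is the clean invocation of the product formula for $\phi$-Newton polygons, which rests on Ore's theory of higher Newton polygons; the rest of the argument is elementary polygon combinatorics combined with a mod-$p$ divisibility count.
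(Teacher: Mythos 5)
The paper does not actually prove this lemma: it is quoted verbatim from Jindal--Khanduja \cite[Theorem 1.3]{Ji-Kh} with the proof explicitly omitted, so there is no in-paper argument to compare yours against. Judged on its own, your proof is correct and follows the standard route for Filaseta-type Newton polygon lemmas. The endpoint and pointwise comparisons between the polygons of $f$ and $F=\sum a_i f_i \phi^i$ are right (condition (i) guarantees this is the $\phi$-expansion, monicity of $f_m$ and conditions (ii), (iii) pin the endpoints at $(0,0)$ and $(m,v_p^x(f_0))$), the bound $v_p^x(h_0)<m'/k\le 1$ forcing $v_p^x(h_0)=0$ is the key combinatorial step, and the final count $\deg q<(m-\ell)\deg\phi$ against $\bar{\phi}^{\,m-\ell}\mid\bar{q}$ closes the argument cleanly. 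The one ingredient you import as a black box is Ore's theorem of the product for $\phi$-Newton polygons (that the edges of $N_\phi(hq)$ are the edges of $N_\phi(h)$ and $N_\phi(q)$ sorted by slope); its hypotheses are satisfied here since $p$ divides none of the leading coefficients of $F$, $h$, $q$ and $\phi\nmid F$, but your justificatory aside about ``$v_p^x$ extending to the unramified extension'' is loose --- the clean reference is the theorem of the product itself (Ore; see also Gu\`ardia--Montes--Nart), which is exactly what \cite{Ji-Kh} relies on. Two cosmetic slips: the top $\bar{\phi}$-coefficient of your $S$ is $\bar{a}_m\bar{f}_m$ rather than $\bar{a}_m$ (and $S\ne 0$ is not even needed), and strictly one should invoke Gauss's lemma at the outset to reduce to a factorization $F=hq$ over $\Z$; neither affects correctness.
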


\section{Proof of Theorem \ref{thm1.1}.}
Before writing the proof of the theorem, we first state two lemmas which are needed for the proof. We skip their proofs.

The following lemma is proved in \cite[Theorem 3]{BHP}. 
	\begin {lemma}\label{lm1}
	Let $a$ and $b$ be fixed relatively prime integers with $b>0$, and let $\pi(x; b, a)$ denote the number of primes $\leq x$ which are congruent to $a\pmod b$. If $b \leq (\log x)^A $ for some fixed $ A >0 $ and $x^{\frac{11}{20}+\varepsilon}\leq h\leq \frac{x}{\log x}$, then
	$$\pi(x;b, a) - \pi(x-h;b, a) \gg\frac{h}{\phi(b) \log x} ~\mbox{ for every }\varepsilon > 0.$$
\end{lemma}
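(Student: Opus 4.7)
The plan is to reduce the short-interval, arithmetic-progression prime count to sums of the von Mangoldt function twisted by Dirichlet characters, and then to exploit explicit-formula plus zero-density technology. Writing
\[
\psi(y;b,a)=\sum_{\substack{n\le y\\ n\equiv a\,(\mathrm{mod}\,b)}}\Lambda(n),
\]
orthogonality of characters gives
\[
\psi(x;b,a)-\psi(x-h;b,a)=\frac{1}{\phi(b)}\sum_{\chi\,(\mathrm{mod}\,b)}\bar\chi(a)\bigl(\psi(x,\chi)-\psi(x-h,\chi)\bigr).
\]
A standard passage from $\psi$ to $\pi$ (the contribution of prime powers being $O(x^{1/2}\log x)$ which is negligible compared to the target $h/(\phi(b)\log x)$ in our range of $h$) then reduces matters to a lower bound of the correct order for the principal character term together with cancellation for the non-principal characters.

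For the principal character $\chi_0$, the contribution is essentially $h-\sum_{\rho_0}\frac{x^{\rho_0}-(x-h)^{\rho_0}}{\rho_0}$, where the sum ranges over non-trivial zeros of $\zeta(s)$ (shifted by the factors $1-p^{-s}$ for $p\mid b$, whose effect is benign because $b\le(\log x)^A$). One handles this via Huxley-type estimates for $\pi(x)-\pi(x-h)$, but pushed beyond the classical $7/12$ barrier. Concretely, I would combine the Heath-Brown identity (a Vaughan/Heath-Brown combinatorial decomposition of $\Lambda$) with mean-value bounds for Dirichlet polynomials and sieve majorants to produce an asymptotic formula for the number of primes in $(x-h,x]$ valid for $h\ge x^{11/20+\varepsilon}$. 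The mean-value inputs are the $L^{2}$ and $L^{4}$ bounds for Dirichlet polynomials $\sum_{N<n\le 2N}a_n n^{-it}$ derived from the large-sieve inequality.

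For the non-principal characters $\chi\neq\chi_0$, I would apply the explicit formula
\[
\psi(x,\chi)-\psi(x-h,\chi)=-\sum_{|\gamma|\le T}\frac{x^{\rho}-(x-h)^{\rho}}{\rho}+O\!\left(\frac{x\log^{2}(bx)}{T}\right),
\]
and use $|x^{\rho}-(x-h)^{\rho}|\ll h\,x^{\sigma-1}$ together with log-free zero-density estimates of the form $\sum_{\chi\,(\mathrm{mod}\,b)}N(\sigma,T,\chi)\ll (bT)^{c(1-\sigma)}$ for an admissible constant $c<5/2$ in the relevant ranges. Because $b\le(\log x)^{A}$, the modulus $b$ is absorbed into the polylog factors, and the Siegel--Walfisz theorem controls the potential Siegel zero. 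Summing over characters, an interchange of summations and a dyadic decomposition of the zero contribution bound it by $o\!\bigl(h/(\phi(b)\log x)\bigr)$ in the claimed range $x^{11/20+\varepsilon}\le h\le x/\log x$.

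I expect the main obstacle to be the short-interval exponent $11/20$. Reaching this exponent goes decisively beyond the Huxley bound $7/12$, and requires the sieve-theoretic argument of Baker--Harman--Pintz: one constructs a lower-bound sieve for primes in $(x-h,x]$ by assembling a carefully weighted sum of Type~I and Type~II Dirichlet polynomial estimates, where the Type~II ranges are supplied by moment estimates and zero-density theorems, and where admissible ``losses'' are quantified through a linear-programming argument. Making this machinery uniform in arithmetic progressions (with modulus $b\le(\log x)^{A}$) is a further technical task, but uniformity is routine here because the dependence on $b$ enters only through polylogarithmic factors that are absorbed by the $\varepsilon$ in $x^{11/20+\varepsilon}$.
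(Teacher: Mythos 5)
The paper offers no proof of this lemma: it is quoted verbatim as Theorem~3 of Baker--Harman--Pintz \cite{BHP}, and the authors explicitly say they skip the proof. So there is no internal argument to compare yours against; the only meaningful comparison is with the cited source, which is a long and technical paper in its own right.

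Measured against that source, your proposal is a reasonable map of the terrain but it is an outline rather than a proof, and it contains one substantive error. In the paragraph on the principal character you assert that the Heath-Brown identity plus Dirichlet-polynomial mean values yields ``an asymptotic formula for the number of primes in $(x-h,x]$ valid for $h\ge x^{11/20+\varepsilon}$.'' No such asymptotic is known, and it is not what \cite{BHP} proves: asymptotic formulas for $\pi(x)-\pi(x-h)$ are only available for $h$ of size about $x^{7/12}$ (Huxley, Heath-Brown), and the entire point of reaching the exponent $11/20$ is that one \emph{abandons} the asymptotic and settles for a lower bound of the correct order of magnitude. The mechanism that achieves this is Harman's sieve: a Buchstab decomposition of the indicator function of the primes in which the terms whose ranges are not covered by the available Type~I and Type~II information are discarded when they occur with a favourable sign, and a numerical (linear-programming) computation verifies that the total discarded contribution is strictly less than the main term. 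Your closing paragraph gestures at exactly this, but the gesture is where the whole difficulty lives: without specifying the decomposition, the admissible Type~I/Type~II ranges uniformly in the modulus $b\le(\log x)^A$, and the numerical verification of a positive lower bound, nothing has been established. Moreover, the explicit-formula-plus-zero-density portion of your write-up, taken on its own, cannot penetrate below the $7/12$ barrier and is therefore in tension with your own concluding remarks. As a referee's judgement: the correct course here is the one the paper takes, namely to cite \cite{BHP} rather than to attempt a self-contained proof.
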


The following lemma is an immediate consequence of Corollary 1.2 of \cite{Sh-Ti}. 
\begin{lemma}\label{lm2}
	Let $a,b,c$ and $d$ be integers with $bc-ad \neq 0$. Then the largest prime factor of $(am+b)(cm+d)$ tends to infinity as the integer $m$ tends to infinity.
\end{lemma}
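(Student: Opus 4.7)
The plan is to reduce the assertion to a standard finiteness theorem for $S$-unit equations, or equivalently to invoke the cited Corollary 1.2 of Shorey--Tijdeman. I argue by contradiction: suppose the largest prime factor of $(am+b)(cm+d)$ does \emph{not} tend to infinity. Then there is a constant $B>0$ and an infinite set $M$ of positive integers $m$ such that every prime dividing $(am+b)(cm+d)$ is at most $B$. Let $S$ be the (finite) set of all primes that are either $\leq B$ or divide $ac(bc-ad)$.

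The key observation is the algebraic identity
\[
c(am+b) - a(cm+d) = bc - ad,
\]
whose right-hand side is a fixed nonzero integer by hypothesis. For every $m\in M$, both $am+b$ and $cm+d$ have all prime factors in $S$, and hence so do $X_m := c(am+b)$ and $Y_m := a(cm+d)$. In other words, each pair $(X_m, Y_m)$ is a solution of the linear equation
\[
X - Y = bc - ad
\]
with $X$ and $Y$ $S$-integers supported on $S$. Distinct values of $m$ yield distinct pairs $(X_m, Y_m)$ because $a,c\neq 0$ (if $a=0$ or $c=0$ the statement is either trivial or reduces to the classical fact about the largest prime factor of a linear polynomial).

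By the classical finiteness theorem for $S$-unit equations (Mahler, generalised by Evertse), or directly by Corollary~1.2 of \cite{Sh-Ti}, the equation $X-Y = bc-ad$ has only finitely many solutions in $S$-integers supported on $S$. Therefore $M$ must be finite, contradicting our assumption, and the lemma follows. The only non-trivial ingredient is the $S$-unit finiteness theorem itself, which is the main obstacle: it rests on deep results from Diophantine approximation (Thue--Mahler, linear forms in $p$-adic logarithms). Once that result is granted, the deduction reduces to the elementary identity displayed above, which is why the paper attributes the lemma as an immediate consequence of Shorey--Tijdeman's corollary.
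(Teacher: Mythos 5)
Your reduction to the $S$-unit equation $X-Y=bc-ad$ is correct and is, in effect, the content of the citation: the paper gives no argument for this lemma at all, stating only that it is an immediate consequence of Corollary 1.2 of \cite{Sh-Ti} and skipping the proof, so your write-up supplies the details the authors omit. The identity $c(am+b)-a(cm+d)=bc-ad$, the enlargement of $S$ by the primes dividing $ac(bc-ad)$, the injectivity of $m\mapsto c(am+b)$ when $ac\neq 0$, and the appeal to Mahler's finiteness theorem for $S$-unit equations over $\Q$ all check out.

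There is, however, one genuine error: the parenthetical disposal of the case $a=0$ or $c=0$. There is no ``classical fact'' that the largest prime factor of a nonconstant linear polynomial tends to infinity; that assertion is false. Take $a=0$, $b=1$, $c=1$, $d=1$: then $bc-ad=1\neq 0$, while $(am+b)(cm+d)=m+1$ has largest prime factor $2$ along the infinite sequence $m=2^k-1$. So the lemma as literally stated fails in the degenerate case, and no proof can rescue it there; what your argument actually establishes is the statement under the additional hypothesis $ac\neq 0$, i.e.\ that $(ax+b)(cx+d)$ has two distinct roots. This is harmless for the paper, since in both places the lemma is invoked (Case (iii) with $a=c=v>0$, and Case (iv) with $a=1$, $c=v$) the leading coefficients are nonzero, but you should add that hypothesis explicitly rather than wave at the degenerate case.
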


\begin{proof}[Proof of Theorem \ref{thm1.1}]
	We fix an $\alpha \in \Q$ which is not a negative integer.  	It is enough to prove that $f(x) = a_m\phi(x)^m + \sum\limits_{j=0}^{m-1}b_ja_j(x)\phi(x)^j$ is irreducible over the rationals for sufficiently large $m$. Recall that $\alpha = \frac{u}{v}$ with $\gcd(u,v) = 1$ and $v>0.$

	We first show that $f(x)$ does not have a non-constant factor over $\Z$ with degree less than $\deg\phi(x)$. Let $p$ be a prime number dividing $vm+u$. Then by hypothesis, it follows that $p\nmid a_m$. %Since the leading coefficient of $f(x)$ is $a_m$, it is not divisible by $p$. 
	Let $c$ denote the content of $f(x)$. As $p\nmid a_m$, we have $p\nmid c$. Now suppose to the contrary that there exists a primitive non-constant polynomial $h(x) \in \Z[x]$ dividing $f(x)$ having degree less than $\deg \phi(x)$. Then in view of Gauss Lemma, there exists $g(x)\in \Z[x]$ such that $\frac{f(x)}{c} = h(x)g(x)$. The leading coefficient of $f(x)$ and hence those of $h(x)$ and $g(x)$ are coprime with $p$. Note that $p$ divides $b_j$ for $0\leq j\leq m-1$. Therefore on passing to $\Z/p\Z$, we see that the degree of $\bar{h}(x)$ is same as that of $h(x)$. Hence $\deg\bar{h}(x)$ is positive and less than $\deg\phi(x)$. This is impossible because $\bar{h}(x)$ is a divisor of $\frac{\overline{f}(x)}{\bar{c}} = \frac{\bar{a}_m}{\bar{c}}\bar{\phi}(x)^{m}$ and $\bar{\phi}(x)$ is irreducible over $\Z/p\Z$.

	Now using Lemma \ref{fila}, we shall show that for $k\in [1, \frac{m}{2}]$ and sufficiently large $m$, $f(x)$ can not have a factor in $\Z[x]$ with degree lying in the interval $[k\deg\phi(x), (k+1)\deg\phi(x))$. For using Lemma \ref{fila}, we consider the polynomial $g(x) = \sum\limits_{j=0}^{m}b_j\phi(x)^j$ with $b_m = a_m$. Keeping in mind that $\alpha$ is not a negative integer implies that for each $j \in \{0, 1, \cdots, m-1\}$, $m-j+\alpha$ and hence $v(m-j)+u$ cannot be zero. We assume that $g(x)$ has a factor in $\Z[x]$ with degree lying in the interval $[\deg\phi(x), (\frac{m}{2}+1)\deg \phi(x))$ and prove our theorem by obtaining a contradiction to Lemma \ref{fila}. 
	
	We divide the proof into cases depending on the size of $k$ with $1\leq k\leq m/2$.

\noindent Case (i): $m^{\frac{11}{20}+\varepsilon}<k\leq \frac{m}{2},$ where $\varepsilon >0$ is an arbitrary small constant.   
	 
	% Lemma \ref{lm1} implies that for relatively prime positive integers $a $ and $b$,
%	\begin{align*}
		%\pi(x;b, a) &= \frac{1}{\phi(b)}\int_2^x\frac{dt}{\log t} + O\bigg(\frac{x}{\log^{4} x}\bigg)\\&= \frac{1}{\phi(b)} \bigg(\frac{x}{\log x}+ \frac{x}{\log^{2} %x}+\frac{2x}{\log^{3} x}+ O \bigg(\frac{x}{\log^{4} x}\bigg)\bigg).
	%\end{align*}

By considering $\pi(x; b, a)- \pi(x-h; b, a)$, it follows from Lemma \ref{lm1} that for $a$ and $b$ fixed, the interval $[x-h, x)$ contains a prime congruent to $a$ modulo $b$ if $h \geq {x^{\frac{11}{20}+\varepsilon}}$ for all sufficiently large $x$. Taking $a = u,~b = v$ and $x = vm + u$, we deduce that for some integer $j \in[0, k)$, the number $v(m-j) + u$ is prime. Call such a prime $p$, and observe that $p \geq 2vm/3$ (since $v$ is a positive integer and $m$ is large). It follows that $p$ does not divide $v$. Observe that 

$$b_{\ell} = \binom{m}{\ell}\frac{(vm+u)(v(m-1)+u)\cdots (v(\ell+1)+u)}{v^{m-\ell}}~~~\mbox{ for }0\leq \ell\leq m-1.$$ %For $j\in\{0,1,.....k-1\}$, the numbers $v(m-j)+u$ appear in the numerator of the fraction on right-hand side above whenever $0\leq \ell\leq m-k$. 
Clearly
\begin{equation}\label{theq1}
	v_{p}(b_{\ell})\geq 1 ~~~\mbox{ for } 0\leq \ell\leq m-k.
\end{equation}
  The slope of the right-most edge of the $\phi$- Newton polygon of $g(x)$ with respect to $p$ is $$ \lambda:= \max\limits_{1 \leq j\leq m}\bigg( \frac{v_p(b_{0})- v_p(b_{j})}{j}\bigg).$$
Keeping in mind that $v_p(b_m) = 0$, to obtain a contradiction from Lemma \ref{fila}  for the case under consideration, we need to show that $\lambda < \frac{1}{k}.$ For this purpose, it is enough to show that $v_{p}(b_{0})=1$, because it follows from (\ref{theq1}), the fact $k\leq m-k$ that $v_{p}(b_{j})\geq 1 > 1-(j/k)$ for $1\leq j\leq k$, and clearly $ v_p(b_{j}) > 1-(j/k)$ for $k <j \leq m$. 

 To show $v_p(b_0) = 1$, it can be easily checked using the fact $ p\geq 2vm/3$ and $m$  sufficiently large that $$ 2p> vm+u\geq v(m-j) + u \geq v+u>-p ~~\mbox{ for }~~0\leq j\leq m-1.$$ As indicated earlier, none of the $v(m-j)+u$ can be zero. Hence, $ p$ itself is the only multiple of $p$ among the number $v(m-j)+u$ with $ 0\leq j\leq m-1$. Since $p\nmid v$ and $b_{0} =((vm+u)(v(m-1)+u)\cdots (v+u))/v^{m}$, we obtain $v_{p}(b_{0})=1$. Hence, for $m$ sufficiently large,  $f(x)$ can not have a factor in $\Z[x]$ with degree lying in the interval $ [k\deg\phi(x), (k+1)\deg\phi(x))$ for $ m^{\frac{11}{20}+\varepsilon}<k\leq \frac{m}{2}$.\\
 %\end{case}
%\begin{case}

\noindent Case (ii): $k_{0}\leq k\leq m^{\frac{11}{20}+\varepsilon}$ with $k_{0} = k_{0}(u,v)$ a sufficiently large integer.\newline\newline We wish to show that $f(x)$ does not have a factor in $\Z[x]$ with degree lying in the interval $[k\deg\phi(x), (k+1)\deg\phi(x))$ for $k \in [k_0, m^{\frac{11}{20}+\epsilon}]$. 

Let $z= k\sqrt{\log k}.$ 
We claim that there  is a prime $p>z$ that divides $v(m-j)+u$ for some $j\in\{0, 1,2....,k-1\}$. Then $(\ref{theq1})$ follows as before, and we will obtain a contradiction to Lemma \ref{fila} by showing $v_p(b_{j}) > v_p(b_{0})-(j/k)$ for $ 1\leq j\leq m$ as this implies the slope of the right-most edge of the $\phi$-Newton polygon of $g(x)$ with respect to $p$ has slope $<1/k$.

 Let \vspace{-0.1in}
$$  T=\{v(m-j)+u: 0\leq j\leq k-1\}.$$
Since $m$ is large, we deduce that each elements of $T$ is greater than or equal to $m/2$. Also, observe that $\gcd(u,v)=1$ implies that each element of $T$ is relatively prime to $v$. For each prime $p\leq z$, we consider an element $a_{p} \in {T}$ with $v_{p}(a_{p})$ as large as possible. Then we consider the set  
$$ S=T-\{ a_{p} : p\nmid v,~ p\leq z\}.$$ Clearly $|S|\geq k-\pi(z).$   By the well-known Chebyshev bound, we have $|S| \geq k-  \frac{2k}{\sqrt{\log k}}.$  

Consider the prime $ p\leq z$ with $p$ not dividing $v$, and let $r=v_{p}(a_{p}).$ By definition of $ a_{p}$, if $j>r$, then there are no multiples of $p^{j}$ in $T$ (and, hence, in $S$). For $1\leq j\leq r$, there are less than or equal to $[k/p^{j}]+1$ multiples of $p^{j}$ in $T$ and, hence, at most $[k/p^{j}]$ multiples of $p^{j}$ in $S$. Therefore,
 $$ v_{p}\bigg(\prod\limits_{s\in S}s\bigg)\leq\sum\limits_{j=1}^{r}\bigg[\frac{k}{p^{j}} \bigg] \leq v_{p}(k!),$$ and hence 
 \begin{equation}\label{eq1.1a}
 	\prod\limits_{s\in S}\prod\limits_{p\leq z} p^{v_{p}(s)}\leq k!\leq k^{k}.
 \end{equation}
 On the other hand using the fact that $k\leq m^{\frac{11}{20}+\varepsilon}$, we see that
 $$\prod\limits_{s\in S} s \geq  \bigg(\frac{m}{2}\bigg)^{|S|}\geq \bigg(\frac{k^{\frac{29}{20}-\varepsilon}}{2}\bigg)^{|S|}.$$ 
 Recalling our bound on $|S|$, we obtain
 \begin{align*}
 	\log\bigg(\prod\limits_{s\in S}{ s}\bigg)&\geq \bigg( k-\frac{2k}{\sqrt{\log k}}\bigg)\bigg(\bigg(\frac{29}{20}-\varepsilon\bigg)\log k-\log2\bigg)\\
 	&\geq k\log k+ \bigg(\frac{9}{20}-\varepsilon\bigg)k \log k+ O(k\sqrt{\log k}).
 \end{align*}
Since $k\geq k_{0}$ where $k_{0}$ is sufficiently large and using (\ref{eq1.1a}), we have $$  \log\bigg(\prod\limits_{s\in S}s \bigg) > k\log k \geq \log\bigg(\prod\limits_{s\in S}\prod\limits_{p\leq z} p^{v_{p}(s)}\bigg).$$ It follows that there is a prime $p>z$ that divides some element of $S$ and, hence, divides some element of $T$. This proves our claim.

%Now we shall show $v_p(b_{j}) > v_p(b_{0})-(j/k)$ for $ 1\leq j\leq m$.
 Fix a prime $p>z$ that divides an element in $T$. Fix $j\in\{1,2,...,m\}$. It only remains to show that $v_p(b_{0})- v_p(b_{j}) < \frac{j}{k}$. Observe that
 \begin{align*}
 	v_p(b_{0})-v_p(b_{j})&\leq v_p((vj+u)(v(j-1)+u)\cdots (v+u))\\
 	&\leq v_p((vj+|u|)!)< \frac{vj+|u|}{p-1}.
 \end{align*}
Since $p>z=k\sqrt{\log k}$ and $k \geq k_{0},$ we deduce that $(vj+|u|)/(p-1)<j/k$ and the inequality $v_p(b_{0})- v_p(b_{j}) < \frac{j}{k}$ follows. Hence, as indicated at the beginning of this case, we obtain a contradiction to Lemma \ref{fila}. \\
%\end{case}
%\begin{case}
	
%\end{case}

\noindent Case (iii): {$2\leq k <k_{0}.$}\newline
By Lemma \ref{lm2} (with $a=v,~ b=u, ~c=v,$ and $d=u-v$), the largest prime factor of the product $(vm+u)(v(m-1)+u)$ tends to infinity. Since $m$ is large, we deduce that there is a prime $p>(v+|u|)k_{0}$ that divides $(vm+u)(v(m-1)+u).$
The argument now follows as in the previous case. In particular,
$$ \frac{v_p(b_{0})-v_p(b_{j})}{j} < \frac{vj+|u|}{j(p-1)}\leq\frac{v+|u|}{p-1}\leq\frac{1}{k_{0}}<\frac{1}{k} ~\mbox{ for } 1\leq j\leq m.$$
Hence, in this case, we also obtain a contradiction.\\

\noindent Case (iv): $k=1.$

 It only remain to prove that $f(x)$ does not have a factor in $\Z[x]$ with degree lying in the interval $[\deg\phi(x), 2\deg\phi(x))$. This will be achieved once we show that there exists  a prime $p > v+|u|$ such that $p|b_j$ for $0\leq j\leq m-1$. If $u=0$, then it is always true for $m\geq 2$. Hence we can assume that $u\neq 0$. In this situation, from Lemma \ref{lm2}, the largest prime factor of $m(vm+u)$ tends to infinity with $m$. We consider a large prime factor $p$ of this product. Note this implies $p\nmid v$. As in the previous case, we are through if $p$ divides $vm+u$. So suppose $p|m$. The binomial coefficient $\binom{m}{j}$ appears in the definition of $b_j$, and this is sufficient to guarantee that $v_p(b_j)\geq 1$ and $v_p(b_{m-j}) \geq 1$ for $1\leq j\leq p-1$. On the other hand,
$$b_j = \binom{m}{j}\frac{(vm+u)(v(m-1)+u)\cdots (v(j+1)+u)}{v^{m-j}}.$$
For $j\leq m-p$, the numerator of the fraction on the right is a product of $\geq p$ consecutive terms in the arithmetic progression $vt+u$ with $\gcd(p,v) =1$; thus, $v_p(b_{m-j}) \geq 1$ for $j\geq p$. This implies that $(\ref{theq1})$ holds with $k=1$. It follows, along the lines of the previous two cases, that $v_p(b_{0})- v_p(b_{j}) < \frac{j}{k}$ for $1\leq j\leq m$. A contradiction to Lemma \ref{fila} is again obtained. 

Therefore combining Cases (i)-(iv), we have shown that for $k\in [1, \frac{m}{2}]$ and sufficiently large $m$, $f(x)$ can not have a factor in $\Z[x]$ with degree lying in the interval $[k\deg\phi(x), (k+1)\deg\phi(x))$. This completes the proof of the theorem.
\end{proof}

	\section{Proof of Theorem \ref{thm1.2}}
	We shall first state two results which will be required for the proof  of our theorem.

	\begin{lemma}\label{prime7}
		For integers $m$ and $k$ with $m\geq k\geq 2$, let $P(m,k)$ denote the product $(m+1)(m+2)\cdots (m+k)$. For $t\geq 1$, define $S_t$ as the set of pairs $(m,k)$ for which $m\geq k\geq 2$ and the largest prime factor of $P(m,k)$ is $k+t$. Then 
		$$S_1 = \{(2,2), (7,2)\},~~~S_2 = \{(3,3), (7,3), (5,5)\},$$
		$$S_3 = \{(3,2), (4,2), (8,2), (14,2), (23, 2), (79,2), (4,4), (5,4), (6,4)\},$$ and
		$$S_4 = \{(4,3), (5,3), (6,3), (13,3), (47,3)\}.$$
	\end{lemma}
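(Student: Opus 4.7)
My plan is to reduce Lemma~\ref{prime7} to a problem about consecutive smooth integers. If $(m,k) \in S_t$ then $p := k+t$ must be prime (being a prime factor of $P(m,k)$); moreover, since $p > k$, exactly one of the integers $m+1, \ldots, m+k$ is divisible by $p$, while all $k$ of them are $p$-smooth. So, for each admissible pair $(k,t)$ with $p = k+t$ prime, the elements of $S_t$ with that $k$-value correspond exactly to runs of $k$ consecutive $p$-smooth integers whose first term is $\geq k+1$ and that contain a multiple of $p$.

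The first step is to bound $k$ in terms of $t$. I would invoke the classical Sylvester--Erd\H{o}s result (and its refinements by Langevin and others): for $m \geq k \geq k_0$ the greatest prime factor of $(m+1)(m+2)\cdots(m+k)$ exceeds any prescribed multiple of $k$. In particular, for the small values $t \in \{1,2,3,4\}$ at hand, only a short, explicitly computable list of $(k,t)$ with $k+t$ prime need be examined; for all larger $k$ the corresponding slice of $S_t$ is empty. Together with the parity restriction (for $t$ even, $k$ must be odd to make $k+t$ have any chance of being prime, etc.), this trims the list of $(k,t)$-values that remain to be checked to the six pairs $(2,1), (3,2), (5,2), (2,3), (4,3), (3,4)$.

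The second step is, for each surviving pair $(k,t)$, to enumerate the length-$k$ runs of $p$-smooth integers. Here I would appeal to St\"ormer's theorem: for any finite set of primes $B$, the consecutive pairs $(n, n+1)$ of $B$-smooth integers form a finite, effectively computable set (reducible to a finite collection of Pell-type equations). Chaining overlapping St\"ormer pairs produces all runs of length $k$. For $B \subseteq \{2,3,5,7\}$ the complete lists are classical (St\"ormer--Lehmer tables). For instance, when $(k,t)=(3,4)$ one needs $\{2,3,5,7\}$-smooth triples $(n,n+1,n+2)$ with $n \geq 4$ and $7 \mid n(n+1)(n+2)$; the tables yield exactly $(5,6,7), (6,7,8), (7,8,9), (14,15,16), (48,49,50)$, giving $m \in \{4,5,6,13,47\}$, which is $S_4$. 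The remaining sets $S_1, S_2, S_3$ are obtained in the same way by reading off smooth pairs, triples, quadruples and the single quintuple $(6,7,8,9,10)$ for $(k,t)=(5,2)$.

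The main obstacle is making the threshold $k_0=k_0(t)$ from step one small and explicit enough that the residual list is manageable; this is where the Sylvester--Erd\H{o}s type inequalities on products of consecutive integers do the real work. A clean way to keep it self-contained is to observe that for $k \geq 8$ the product $(m+1)\cdots(m+k)$ is divisible by a prime exceeding any of the $k+t$'s occurring in the lemma (by a direct count of possible prime power divisors $\leq k+t$ versus the size of the product, using Chebyshev-type estimates). Once the threshold is fixed and St\"ormer's tables are invoked, the remaining verification is mechanical.
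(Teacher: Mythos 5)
Your proposal is correct and follows essentially the same route the paper indicates: the paper gives no detailed proof of Lemma~\ref{prime7}, only noting that it rests on Lehmer's St\"ormer-type method for consecutive smooth integers together with the Ecklund--Eggleton--Erd\H{o}s--Selfridge bounds, which is exactly the reduction (smooth runs enumerated via Pell equations, plus a Sylvester--Erd\H{o}s-type threshold on $k$) that you describe. Your enumeration reproduces the stated sets $S_1,\dots,S_4$ correctly, so there is nothing further to flag.
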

	It may be pointed out that, in the above lemma, for every $(m, k)\not\in S_1\cup S_2\cup \cdots \cup S_{t-1}$ with $m\geq k\geq 2$, the largest prime factor of $P(m,k)$ is $\geq k+t.$
	
	The proof of the above lemma relies on a method due to Lehmer \cite{Leh}(for classifying all the cases where $(m+1)(m+2)$ has all its prime factors bounded below by a prescribed bound) and a result of Ecklund et. al. \cite{EEE}.
	
	The next lemma is on solutions of some equations. The assertion $(i)$ is a special case of Catalan's conjecture, now Mihailescu's Theorem when $r>1, s>1$, see \cite{Mih04}. The case $r=1$ or $s=1$ is immediate. The assertions $(ii)$ and $(iii)$ are due to Nagell \cite{Nag58}. For assertions $(iv)-(vi)$, see \cite[Lemma 4]{LaSg06a}.
	\begin{lemma}\label{dio}
		Let $r>0, s>0, t>0$ be integers. The solutions of the following equations are given by 
		%\begin{center}
\begin{longtable}[h!]{m{1.0cm} m{5.4cm} m{7.3cm}} 
\hline 
   & Equations  &Solutions \\ \hline
 $(i)$&$a^r-b^s=\pm 1,a,b\in\{2,3,5\}$ & $3-2=1,2^2-3=1,5-2^2=1,3^2-2^3=1$ \\ \hline
 $(ii)$&$2^r+3^s=5^t$ & $2+3=5,2^4+3^2=5^2$ \\ \hline
 $(iii)$&$2^r+5^s=3^t$ & $2+1=3, 2 +25 = 27, 4+5 = 9,8+1 = 9$ \\ \hline
 $(iv)$&$2^r3^s-5^t = \pm 1$ & $2\cdot 3-5=1, 2^3\cdot 3-5^2 = -1$  \\ \hline
 $(v)$&$3^r5^s-2^t = \pm 1$ & $3\cdot 5-2^4 = -1$ \\ \hline
 $(vi)$&$2^r5^s-3^t = \pm 1$ & $2\cdot 5-3^2 = 1, 2^4\cdot 5-3^4 = -1$ \\ \hline
 %\caption{\label{tab:2}The values of $m_i$'s for $1\leq i\leq 7$ depending on $c$ and $k$.}
\end{longtable}
%\end{center}     
	\end{lemma}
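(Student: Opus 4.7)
The plan is to verify each of the six equations separately, leaning heavily on the three cited sources (Mihailescu, Nagell, and Laurent--Signori), since Lemma~\ref{dio} is essentially a compilation from the literature; my task is to articulate which known theorem handles which case and how the small exponents are to be cleaned up by direct enumeration.

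For assertion (i), I would first reduce to the ``serious'' case $r>1$ and $s>1$, which is precisely Mihailescu's resolution of Catalan's conjecture \cite{Mih04}: the only solution of $x^p-y^q=1$ with $x,y,p,q>1$ is $3^2-2^3=1$. This accounts for the last entry on the (i)-row. The boundary cases $r=1$ or $s=1$ are elementary: if $r=1$, the equation reads $a-b^s=\pm 1$ with $a\in\{2,3,5\}$ and $b^s\in\{a-1,a+1\}\subseteq\{1,2,3,4,5,6\}$, and I would enumerate the finitely many $(a,b,s)$ with $b\in\{2,3,5\}$ satisfying this, recovering $3-2=1$, $5-2^2=1$, and $2^2-3=1$ (up to sign), together with the symmetric case $s=1$ already captured.

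For assertions (ii) and (iii), I would invoke Nagell's classical theorems on $2^r+3^s=5^t$ and $2^r+5^s=3^t$ \cite{Nag58}. To explain what happens under the hood, I would note that Nagell's method uses reduction modulo the small moduli $3$, $4$, $5$, $8$, $9$, $25$ to pin down the parities and residues of $r,s,t$, then factors the resulting equations (after substituting $r=2r_1$ or $s=2s_1$) inside quadratic rings such as $\mathbb{Z}[\sqrt{-1}]$ and $\mathbb{Z}[\sqrt{-2}]$, exploiting that these are PIDs, and finally bounds the remaining exponents by elementary descent until the listed solutions fall out. I would not reproduce this here, but simply cite \cite{Nag58} for the complete list.

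For assertions (iv)--(vi), the governing $S$-unit equations with $S=\{2,3,5\}$ have been tabulated, and I would appeal directly to \cite[Lemma 4]{LaSg06a}, which gives exactly the solution sets displayed in the three bottom rows of the table. The main obstacle, such as it is, is purely bookkeeping: for each row I must verify that every tuple $(r,s,t)$ listed indeed satisfies the equation (a one-line arithmetic check) and that no additional solutions have been overlooked under the stated conventions $r,s,t>0$ (a few of the displayed solutions, e.g.\ $2+1=3$ in row~(iii), should be read in the boundary sense $5^0=1$, matching the convention used in the cited references). Since all three pieces are already in the literature, there is no genuine mathematical obstruction; the proof amounts to correctly attributing each row, which is why the authors omit it.
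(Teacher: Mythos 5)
Your proposal matches the paper's treatment exactly: the authors give no proof beyond attributing (i) to Mihailescu's theorem (with the $r=1$ or $s=1$ cases handled as immediate), (ii)--(iii) to Nagell, and (iv)--(vi) to \cite[Lemma 4]{LaSg06a}, which is precisely your decomposition. One small correction: the reference \cite{LaSg06a} is by Laishram and Shorey, not ``Laurent--Signori,'' though the citation itself is the right one.
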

	
	\begin{proof}[Proof of Theorem \ref{thm1.2}] 
		Since $\alpha\in\{0,1,2,3,4\}$, we have $u\in\{0,1,2,3,4\}$ and $v=1$.  We define  $f(x)$
	and $g(x)$ as we did in the proof of Theorem \ref{thm1.1}. As there always exists a prime $p$ dividing $m+u$ except $(m, u) = (1,0)$, it follows from the second paragraph of the proof of Theorem \ref{thm1.1} that $f(x)$ can not have a non-constant factor over $\Z$ with degree less than $\deg\phi(x).$

	Therefore, it is enough to prove that $f(x)$ does not have a factor in $\Z[x]$ with degree lying in the interval $[k\deg\phi(x), (k+1)\deg\phi(x))$ with $k\in [1,\frac{m}{2}]$ and $m\geq 2$. 
	 %In particular $M$ induces the positive integers $\leq 100.$ Our approach is to obtain a contradiction  to lemma\ref{lm2} when $m\notin M.$ For $m\in M,$ we will rely on computations to verify that $L^{\alpha}_{m}(x)$ irreducible over the rationals in these cases. We do not address the irreducibility  of more general polynomials $f(x)$ for $m\in M.$ 
	 We divide our remaining proof into two cases depending on whether $k\geq 2$ or $k=1.$
	 
	 Suppose  first that  $k\geq 2$ and $m\geq 2.$ In the notaion of Lemma \ref{prime7}, observe that $P(m-k+u, k)$ divides $b_{j}$ for $0\leq j\leq m-k.$ Since $m-k+u\geq m-k\geq k,$ Lemma \ref{prime7} implies that $P(m-k+u, k)$
	has a prime divisor $p\geq k+u+1$ unless
	\begin{equation}\label{anuj}
		(m-k+u,k)\in S = S_{1} \cup S_{2}\cup \cdots \cup S_{u}, 
	\end{equation}   
	where $S_i$ is as given in Lemma \ref{prime7} for $1\leq i\leq 4$.
	  We first assume that $ (m-k+u,~k)\notin S$ and fix a prime $p$ with $p\geq k+u+1$ as above.  Since $p$ divides $P(m-k+u, k)$, we have $p$ divides $b_{j}$ for $0\leq j\leq m-k.$ We wish to point here that if there exists a prime $p\geq u+2$ with $p|b_j$ with $0\leq j\leq m-1$, then all the following arguments will work even for $k= 1.$ By hypothesis $p\nmid b_{m}$. For using Lemma \ref{fila}, we are left with verifying that the slope of the right-most edge of the $\phi$-Newton polygon of $g(x)$ with respect to $p$ has slope $< \frac{1}{k}$ for $k\geq 2$.
	  ~~~~The slope of the right-most edge is given by
	 $$ \max\limits_{1\leq j\leq m}\bigg\{\frac{v_p(b_{0})-v_p(b_{j})}{j}\bigg\}.$$ Observe that 
	 \begin{equation}
	 	\frac{b_{0}}{b_{j}} = \frac{(j+ u)(j-1+ u) \cdots (1+u)}{\binom{m}{j}}.
	 \end{equation}
	 It follows that
	 $$ v_p(b_{0})-v_p(b_{j})\leq v_p((j+u)(j-1+u)\cdots (1+u))\leq v_p((j+u)!).$$ If $j\leq k,$ then $p\geq k+u+1$ implies that $v_p(b_{0})-v_p(b_{j})\leq 0.$ If $j> k,$ then using the  simple inequality $(j+\alpha)/(k+\alpha) < j/k $, we obtain
	 $$ v_p(b_{0})-v_p(b_{j})\leq v_p((j+\alpha)!) <\frac{j+\alpha}{p-1}\leq \frac{j+\alpha}{k+\alpha}<\frac{j}{k}.$$ We combine the above to deduce, as desired, that the right-most edge has slope $<\frac{1}{k}.$ Hence, using Lemma \ref{fila}, $f(x)$ can not a factor in $\Z[x]$ with degree lying in the interval $[k\deg\phi(x), (k+1)\deg\phi(x))$ with $k\in [2,\frac{m}{2}]$, unless $(m-k+u, k) \in S.$ Now suppose $(m-k+u,k) \in S$. The following table provides us all the exceptional cases corresponding to each $u$ with $2k\leq m$.

	 \begin{longtable}[h!]{|m{1.0cm}|m{9.7cm}|} 
\hline 
    $u$  & $(m,k)$ \\ \hline
 1 & $(8,2)$ \\ \hline
 2 & $(7,2),(8,3)$ \\ \hline
 3 & $(6,2),(7,3),(7,2),(13,2),(22,2),(78,2)$ \\ \hline
 4 & $(5,2),(6,2),(12,2),(21,2),(77,2),(6,3), (12,3),(46,3)$ \\ \hline
 
 %\caption{\label{tab:2}The values of $m_i$'s for $1\leq i\leq 7$ depending on $c$ and $k$.}
\end{longtable}
	  For all these pairs $(m,k)$ except for $(m, k, u)\in \{(6,2,4), (6,3,4)\}$, we now  provide a prime number $p$ (see the following tables) such that $p$ divides $b_j$ for $0\leq j\leq m-k$ and the right-most edge has slope $<\frac{1}{k},$ and hence we are done for all these cases in view of Lemma \ref{fila}.
	  	 
	    \begin{minipage}[]{0.30\linewidth}
        \begin{center}
\begin{longtable}[h!]{|m{0.7cm}|m{0.7cm}|m{0.7cm}|m{0.7cm}|} 
\hline 
  $u$ & $k$ & $m$   &$p$ \\ \hline
 $1$ & $2$ &$8$ & $7$ \\ \hline
 $2$ &$2$ &$7$ & $7$ \\ \hline
 $2$ & $3$&$8$ & $7$  \\ \hline
 $3$ & $2$ &$6$ & $5$ \\ \hline
 $3$ & $3$ & $7$   &$7$ \\ \hline
 $3$ & $2$ &$7$ & $7$ \\ \hline
 %\caption{\label{tab:2}The values of $m_i$'s for $1\leq i\leq 7$ depending on $c$ and $k$.}
\end{longtable}
\end{center}      
    \end{minipage}
    \hfill
    \begin{minipage}[]{0.30\linewidth}
    \begin{center}
\begin{longtable}[h!]{|m{0.7cm}|m{0.7cm}|m{0.7cm}|m{0.7cm}|} 
\hline 
    $u$ & $k$ & $m$   &$p$ \\ \hline
 $3$ & $2$ & $13$ & $13$ \\ \hline
 $3$ &$2$ & $22$ & $7$ \\ \hline
 $3$ & $2$& $78$ & $7$  \\ \hline
 $4$ & $2$ & $5$ & $11$ \\ \hline
 $4$ & $2$ & $6$   &$--$ \\ \hline
 $4$ & $2$ &$12$ & $11$ \\ \hline
 %\caption{\label{tab:2}The values of $m_i$'s for $1\leq i\leq 7$ depending on $c$ and $k$.}
\end{longtable}
\end{center}     
         \end{minipage}
             \hfill
    \begin{minipage}[]{0.23\linewidth}
    \begin{center}
\begin{longtable}[h!]{|m{0.7cm}|m{0.7cm}|m{0.7cm}|m{0.7cm}|} 
\hline 
    $u$ & $k$ & $m$   & $p$ \\ \hline
 $4$ & $2$ &$21$ & $7$ \\ \hline
 $4$ &$2$ &$77$ & $7$ \\ \hline
 $4$ & $3$&$6$ & $--$  \\ \hline
 $4$ & $3$ &$12$ & $11$ \\ \hline
 $4$ & $3$ & $46$   &$23$ \\ \hline
 %\caption{\label{tab:2}The values of $m_i$'s for $1\leq i\leq 7$ depending on $c$ and $k$.}
\end{longtable}
\end{center}     
         \end{minipage}
\\

\indent Now we deal with the case when $k = 1$, $m\geq 2$. We divide this case into five sub-cases according to values of $u$. We first recall, as pointed out in the previous case, that if there exists a prime $p \geq u+2$ such that $p|b_j$ for $0\leq j\leq m-1$, then we are done in this case in view of Lemma \ref{fila}.

\noindent Subcase (i) $u=0$. In this situation, note that $k+u+1 = 2$ and there is clearly a prime $p \geq 2$ dividing $m$ for $m\geq 2$ and hence, in view of Lemma \ref{fila}, $f(x)$ can not have a factor in $\Z[x]$ with degree lying in the interval $[\deg\phi(x), 2\deg\phi(x))$. So, we are done.

\noindent Subcase (ii) $u=1$. This subcase follows by observing that there always exists a prime $p \geq 3(=u+2)$ dividing $m(m+1)$ for $m\geq 2$. 

\noindent Subcase (iii) $u=2$.  Observe that $m(m+2)$ always divides $b_j$ for $0\leq j\leq m-1$ for $m\geq 3$. Note that $m=2$ is an exceptional case. Keeping in mind Lemma \ref{dio}, one can easily verify that there always exists a prime $p \geq 5$ dividing $m(m+2)$ unless $m \in \{4,6,16\}$. 

Let $m \in \{4,6,16\}$. We set $p$ = $2$ or $3$ according as $m\in \{4, 16\}$ or $m = 6$. One can easily check that $p$ divides $b_j$ for $0\leq j\leq m-1$. Keeping in mind Equation (\ref{anuj}), one can check that the right-most edge of the $\phi$-Newton polygon of $g(x)$ with respect to $p$ has slope $< 1$. Thus we are done in view of Lemma \ref{fila}.

\noindent Subcase (iv) $u=3$. Note that $m(m+3)$ always divides $b_j$ for $0\leq j\leq m-1$ for $m\geq 2$. Keeping in mind Lemma \ref{dio}, it can be easily seen that there always exists a prime $p \geq 5$ dividing $m(m+3)$ unless $m \in \{3, 6, 9, 24\}$. 

If $m \in \{3, 6, 9, 24\}$, then one can check that $3$ divides $b_j$ for $0\leq j\leq m-1$ and the right-most edge of the $\phi$-Newton polygon of $g(x)$ with respect to $3$ has slope $< 1$. So, using again Lemma \ref{fila}, we are done.

\noindent Subcase (v) $u=4$. Using Lemma \ref{dio}, it can be verified that there always exists a prime $p \geq 7$ dividing $m(m+4)$, and hence $b_j$, for $0\leq j\leq m-1$ unless $m \in \{2, 4, 5, 8, 12, 16, 20, 32, 36, 60, 96, 320\}.$ 

Consider $p = 3$ if $m\in \{2, 8, 12, 32, 36, 96\}$, $p=2$ if $m = 16$, and $p=5$ if $m\in \{5, 20, 60, 320\}.$ One can check that $p$ divides $b_j$ for $0\leq j\leq m-1$ and the right-most edge of the $\phi$-Newton polygon of $g(x)$ with respect to $p$ has slope $< 1$. Therefore, using Lemma \ref{fila}, we are done.

This completes the proof of the theorem.
	\end{proof}

 \medskip
  \vspace{-3mm}
 % \newpage

 \end{document}